\documentclass[a4paper,12pt]{article}
\usepackage{amssymb}
\usepackage{amsthm}
\usepackage{tikz}
\usepackage{tikz,pgfplots}
\usepackage{amsmath,amssymb}
\usepackage{todonotes}
\usepackage{url}
\usepackage{diagbox}

\usepackage{color}
\usepackage{graphicx}
\usepackage{placeins,caption}

\DeclareMathOperator{\gp}{gp}

\DeclareMathOperator{\cp}{\,\square\,}
\usepackage[top=3cm,bottom=3cm,left=2.8cm,right=2.8cm]{geometry}
\setlength{\marginparwidth}{3cm}
\setlength{\parskip}{1ex}

\newcommand{\cupdot}{\mathbin{\mathaccent\cdot\cup}}

	\newtheorem{theorem}{Theorem}[section]
	
	\newtheorem{corollary}[theorem]{Corollary}
	\newtheorem{proposition}[theorem]{Proposition}

	\newtheorem{problem}[theorem]{Problem}
	
	\theoremstyle{definition}

	\newtheorem{definition}[theorem]{Definition}

\newcommand{\address}[1]{#1}
	\begin{document}
	
		\title{General Position Polynomials}
		\author{Vesna Ir\v{s}i\v{c} $^{a,b}$ \\ \texttt{\footnotesize vesna.irsic@fmf.uni-lj.si}  \\
		{\footnotesize ORCID: 0000-0001-5302-5250}
            \and
            Sandi Klav\v{z}ar $^{a,b,c,}$\footnote{corresponding author, available at sandi.klavzar@fmf.uni-lj.si} \\ \texttt{\footnotesize sandi.klavzar@fmf.uni-lj.si}  \\
		{\footnotesize ORCID: 0000-0002-1556-4744}
            \and
		Gregor Rus $^{a,b}$ \\ \texttt{\footnotesize gregor.rus@fmf.uni-lj.si}  \\
		{\footnotesize ORCID:  0000-0001-9739-0575}
		\and
		James Tuite $^{d}$ \\ \texttt{\footnotesize james.t.tuite@open.ac.uk} \\ 
		{\footnotesize ORCID: 0000-0003-2604-7491}
  }

\maketitle

\address{
	$^a$ Faculty of Mathematics and Physics, University of Ljubljana, Slovenia
	
	$^b$ Institute of Mathematics, Physics and Mechanics, Ljubljana, Slovenia
	
	$^c$ Faculty of Natural Sciences and Mathematics, University of Maribor, Slovenia
	
	$^d$ Department of Mathematics and Statistics, Open University, Milton Keynes, UK

}

\begin{abstract}
A subset of vertices of a graph $G$ is a general position set if no triple of vertices from the set lie on a common shortest path in $G$. In this paper we introduce the general position polynomial as $\sum_{i \geq 0} a_i x^i$, where $a_i$ is the number of distinct general position sets of $G$ with cardinality $i$. The polynomial is considered for several well-known classes of graphs and graph operations. It is shown that the polynomial is not unimodal in general, not even on trees. On the other hand, several classes of graphs, including Kneser graphs $K(n,2)$, with unimodal general position polynomials are presented.
\end{abstract}

\noindent
{\bf Keywords:} general position set; general position number; general position polynomial; unimodality; tree; Cartesian product of graphs; Kneser graph 

\noindent
AMS Subj.\ Class.\ (2020): 05C31, 05C69, 05C76

\section{Introduction}

Given a graph $G = (V(G), E(G))$, a set $S\subseteq V(G)$ of vertices of $G$ is a {\em general position set} if no triple of vertices from $S$ lie on a common shortest path in $G$. The cardinality of a largest general position set of $G$ is called the \emph{general position number} of $G$ and is denoted by $\gp(G)$. These sets were independently introduced in~\cite{Chandran-2016, manuel-2018a} and have already been studied from many perspectives, cf.~\cite{AnaChaChaKlaTho, KlaKriTuiYer, klavzar-2022, KorzeVesel, Patkos-2019, tian-2021, yao-2022}. In this paper, we explore general position sets from the point of view of the counting polynomial defined in the following standard way. 
 
\begin{definition}
The \emph{general position polynomial} of a graph $G$ is the polynomial 
$$ \psi (G) = \sum_{i \ge 0}a_ix^i\,,$$ 
where $a_i$ is the number of distinct general position sets of $G$ with cardinality $i$.
\end{definition}

A polynomial is said to be {\em unimodal} if its coefficients are non-decreasing and then non-increasing. Unimodality is one of the most important and most studied properties of counting polynomials in graph theory. In the next paragraph, we give a very brief justification for our claim.  

Since the {\em matching polynomial} of a graph has only real zeros~\cite{Heilmann-1972}, it is unimodal. The unimodality of the {\em chromatic polynomial} has been established in~\cite{Huh-2012}. In~\cite{AlikhaniPeng-2014} it has been conjectured that the {\em domination polynomial} of an arbitrary graph $G$ is also unimodal. The conjecture has been approached from different perspectives, see~\cite{AlikhaniJahari-2017, Beaton-2022, Burcroff-2023, LauAlikhani-2022}, but it remains open. On the other hand, it is known that the {\em independence polynomial} is not unimodal in general, but it has been conjectured by Alavi, Malde, Schwenk, and Erd\H{o}s that the independence polynomial of a tree is unimodal~\cite{Alavi-1987}, a conjecture which is also still open. It was very recently demonstrated that the conjecture cannot be strengthened up to its log-concave version~\cite{kadrawi-2023+}. On the other hand, the independence polynomial of a claw-free graph is unimodal~\cite{Chudnovsky-2007}.  

The rest of the paper is organised as follows. In the next section we determine the general position polynomial of several families of graphs and give an inclusion-exclusion-like formula for the polynomial. We also construct an infinite number of pairs of non-isomorphic trees with the same general position polynomial. In Section~\ref{sec:operations} we consider the general position polynomials of disjoint unions of graphs, joins of graphs, and Cartesian products of graphs. In particular, we express the general position polynomial of the join of two graphs with the clique polynomial and the independent union of cliques polynomial (to be defined in Section~\ref{sec:operations}) of the factors, and determine $\psi(P_r\cp P_s)$. Then, in Section~\ref{sec:unimodality}, we consider unimodality of the polynomial. We first demonstrate that it is not unimodal in general and not even unimodal on the class of trees. On the other hand, we prove that it is unimodal on combs, Kneser graphs $K(n,2)$, and a family of graphs containing complete bipartite graphs minus a matching. The paper is concluded with some open problems and suggestions for future research. 

\section{Basic results and examples}
\label{sec:basics}

Let $G$ be a graph. Then, clearly, the degree of $\psi (G)$ is $\gp (G)$. It was shown in~\cite[Theorem 2.10]{Chandran-2016} that $C_4$ and $P_n$, $n\ge 2$, are the only connected graphs $G$ with $\gp(G) = 2$, hence among connected graphs the degree of a general position polynomial is equal to $2$ precisely for $C_4$ and for $P_n$, $n\ge 2$. In addition, if $G$ is of order $n$, then since every set of at most two vertices is a general position set, its general position polynomial starts as 
\begin{equation}
\label{eq:first-3-terms}
\psi (G) = 1 + nx + {n \choose 2}x^2 + \cdots 
\end{equation}

We now derive general position polynomials for some standard families of graphs. 

\begin{proposition}
\label{prop:examples}
\begin{enumerate}
\item[(i)] If $n\ge 1$, then $\psi(K_n) = (1+x)^n$.
\item[(ii)] If $n\ge 1$, then $\psi (P_n) = 1+nx+{n \choose 2}x^2$.
\item[(iii)] If $n\ge 3$ is odd, then $\psi(C_{n}) = 1+nx+{n \choose 2}x^2+ \left ( {n \choose 3}-n{\lfloor \frac{n}{2} \rfloor \choose 2}\right ) x^3$. 
\item[(iv)] If $n\ge 4$ is even, then $\psi (C_{n}) = 1+nx+{n \choose 2}x^2+\left ( {n \choose 3}-n{{\frac{n}{2} -1 }\choose 2}-\frac{n(n-2)}{2}\right ) x^3$. 
\item[(v)] If $m\geq n\ge 1$, then $\psi (K_{m,n}) = 1+ (m+n)x+{m+n \choose 2}x^2 + \sum_{i=3}^m \left({m \choose i} + {n \choose i}\right)x^i$.
\end{enumerate}
\end{proposition}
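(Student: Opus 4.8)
The plan is to treat each family by first identifying exactly which triples of vertices lie on a common shortest path, so that a set is in general position precisely when it contains no such \emph{forbidden} triple, and then to count the admissible sets by cardinality. Parts (i) and (ii) follow immediately. In $K_n$ every two vertices are adjacent, so each shortest path has only two vertices and no triple is forbidden; hence every subset is a general position set and $\psi(K_n)=\sum_{i\ge0}\binom{n}{i}x^i=(1+x)^n$. In $P_n$, for any three vertices the two outermost ones are joined by a unique shortest path, namely the subpath through them, which contains the third vertex; thus no set of size at least $3$ is admissible and $\psi(P_n)=1+nx+\binom{n}{2}x^2$.

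For the cycles I would use the following arc criterion. Three vertices of $C_n$ partition the cycle into three arcs of lengths $\alpha,\beta,\gamma$ with $\alpha+\beta+\gamma=n$, and a chosen one of the three vertices lies on a shortest path between the other two exactly when the arc opposite to it has length at least $n/2$; consequently the triple is in general position if and only if all three arcs are strictly shorter than $n/2$. When $n$ is odd all distances are realised by unique shortest paths, so each forbidden triple has a unique middle vertex $Q$. Writing $\alpha,\beta\ge1$ for the two arcs incident with $Q$, the triple is forbidden iff $\alpha+\beta\le\lfloor n/2\rfloor$, and the number of such pairs is $\binom{\lfloor n/2\rfloor}{2}$; multiplying by the $n$ choices of $Q$ gives $n\binom{\lfloor n/2\rfloor}{2}$ forbidden triples, which is exactly the correction in the coefficient of $x^3$ in (iii).

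When $n$ is even, antipodal vertices admit two shortest paths, so the boundary case of an arc of length exactly $n/2$ must be handled separately. I would split the forbidden triples into those with an arc strictly longer than $n/2$ and those with an arc of length exactly $n/2$. The first type is counted as before but now with $\alpha+\beta\le\tfrac{n}{2}-1$, giving $n\binom{(n/2)-1}{2}$; the second consists of an antipodal pair together with a middle vertex $Q$, and choosing $Q$ and then the split $\alpha+\beta=n/2$ with $\alpha,\beta\ge1$ yields $n\big(\tfrac{n}{2}-1\big)=\tfrac{n(n-2)}{2}$ such triples. Since at most one arc can attain length $n/2$, these families are disjoint and exhaust the forbidden triples, giving the coefficient of $x^3$ in (iv). To see that both cyclic polynomials have degree $3$, take four vertices in cyclic order with consecutive gaps $g_1,g_2,g_3,g_4$ summing to $n$; the arc criterion applied to the triple omitting the fourth vertex forces $g_3+g_4<n/2$ and applied to the triple omitting the second vertex forces $g_1+g_2<n/2$, so $n=g_1+g_2+g_3+g_4<n$, a contradiction. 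Hence $a_i=0$ for $i\ge4$.

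Finally, for $K_{m,n}$ with parts $A$ and $B$ the forbidden triples are exactly those with two vertices in one part and one in the other: if $a_1,a_2\in A$ then the path $a_1 b a_2$ shows that any $b\in B$ lies on a shortest path between $a_1$ and $a_2$, and symmetrically with the roles of $A$ and $B$ exchanged, whereas three vertices inside a single part are admissible because the internal vertex of each of their shortest paths lies in the opposite part. Therefore a set of size at least $3$ is in general position if and only if it is contained in $A$ or in $B$, contributing $\binom{m}{i}+\binom{n}{i}$ for each $i\ge3$, while every set of size at most $2$ is admissible; collecting terms yields (v). The step I expect to be most delicate is the even-cycle count, since the loss of unique shortest paths between antipodal vertices breaks the clean ``one long arc'' dichotomy of the odd case and forces a careful, non-overlapping treatment of the arc-equal-to-$n/2$ boundary case; the auxiliary fact that no four vertices form a general position set is the other crucial point, as it is what caps the degree of the cyclic polynomials at $3$.
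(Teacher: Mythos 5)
Your proof is correct and takes essentially the same approach as the paper: for each family you identify the forbidden triples (those lying on a common geodesic) and subtract their count from $\binom{n}{3}$, with the antipodal pairs of the even cycle handled as a separate case contributing $\tfrac{n}{2}(n-2)$ triples. The only differences are organizational — you sum over the middle vertex and its two incident arcs where the paper sums over pairs of endpoints at distance $r$ (yielding the same totals $n\binom{\lfloor n/2\rfloor}{2}$ and $n\binom{n/2-1}{2}+\tfrac{n(n-2)}{2}$), and you make explicit the arc criterion and the degree cap $a_i=0$ for $i\ge 4$ on cycles, which the paper leaves implicit.
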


\begin{proof}
(i) Any subset of $i$ vertices in $K_n$ for $0 \leq i \leq n$ is in general position, so the coefficient at $x^i$ in $\psi (G)$ is ${n \choose i}$. Thus $\psi (G) = \sum _{i=0}^{n}{n \choose i}x^i = (1+x)^n$.

(ii) Follows from~\eqref{eq:first-3-terms} and the previously mentioned fact that $\gp(P_n) = 2$ for $n\ge 2$. 

(iii) Consider $C_{2d+1}$, $d\ge 1$. We count the number of triples of vertices that are on a common geodesic. For $2 \leq r\leq d$ there are exactly $n$ pairs of vertices at distance $r$ on the cycle and each such pair corresponds to exactly $r-1$ sets from $\binom{V(C_n)}{3}$ that are on a common geodesic. Thus there are $n\sum_{r=1}^{d} (r-1) = n{d \choose 2}$ triples that are on a common geodesic and hence there are exactly ${n \choose 3}-n{d \choose 2}$ triples that are in general position.   

(iv) Consider $C_{2d}$, $d\ge 2$. The reasoning for odd cycles applies to pairs of vertices at distance at most $d-1$ from each other; however, each pair of vertices at distance $d$ now corresponds to $n-2$ sets from $\binom{V(C_n)}{3}$ on a common geodesic. Therefore there are 
\[ \frac{n}{2}(n-2)+n \sum_{r=1}^{d-1} (r-1) = n{d-1 \choose 2}+\frac{n}{2}(n-2)\] 
triples of vertices that are on a common geodesic.

(v) The formula follows since $\gp(K_{m,n}) = \max\{m, n\} = m$ and since a general position set $S$ of $K_{m,n}$ of cardinality at least $3$ is an independent set, so that $S$ is a subset of one of the bipartition sets of $K_{m,n}$. 
\end{proof}

The general position polynomial can also be expressed via the inclusion-exclusion principle as follows. For a positive integer $n$, let $[n] = \{1,\ldots,n\}$.

\begin{proposition}
\label{prop:inclusion-exclusion}
Let $G$ be a graph and let $X_1,\ldots,X_n$ be the maximal general position sets of $G$. Then 
$$\psi(G) = \sum_{k = 1}^n  (-1)^{k-1} \sum_{\{i_1,\ldots,i_k\}\subseteq [n]} \psi(X_{i_1} \cap \cdots \cap X_{i_k})\,.$$
\end{proposition}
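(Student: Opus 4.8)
The plan is to recognise the family of all general position sets of $G$ as a downward-closed set system and then apply a weighted form of the inclusion--exclusion principle, the weights living in the polynomial ring $\mathbb{Z}[x]$.

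First I would record the key structural observation that the general position property is hereditary: if $S$ is a general position set and $S'\subseteq S$, then no triple from $S'$ lies on a common geodesic either, so $S'$ is again a general position set. Consequently every general position set of $G$ is contained in at least one maximal general position set $X_i$, and conversely every subset of any $X_i$ is a general position set. In particular each intersection $X_{i_1}\cap\cdots\cap X_{i_k}$, being a subset of $X_{i_1}$, is itself a general position set, so the general position sets contained in it are exactly all of its subsets; hence $\psi(X_{i_1}\cap\cdots\cap X_{i_k}) = (1+x)^{|X_{i_1}\cap\cdots\cap X_{i_k}|}$, in the same manner as part (i) of Proposition~\ref{prop:examples}.

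Next, for each $i\in[n]$ let $A_i$ denote the collection of all subsets of $X_i$. By the previous paragraph the collection of all general position sets of $G$ equals $\bigcup_{i=1}^{n} A_i$, and a vertex set $S$ lies in $A_{i_1}\cap\cdots\cap A_{i_k}$ precisely when $S\subseteq X_{i_1}\cap\cdots\cap X_{i_k}$. I would then assign to every vertex subset $S$ the weight $x^{|S|}\in\mathbb{Z}[x]$, so that for any collection $\mathcal{A}$ of subsets one has $w(\mathcal{A}) = \sum_{S\in\mathcal{A}} x^{|S|}$; in particular $\psi(G) = w\bigl(\bigcup_{i=1}^{n} A_i\bigr)$ since the general position sets are exactly the members of $\bigcup_i A_i$.

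The conclusion then follows from the inclusion--exclusion principle applied to this weighted count. Because $w$ is additive over disjoint unions, the classical identity $w\bigl(\bigcup_{i} A_i\bigr) = \sum_{k=1}^{n}(-1)^{k-1}\sum_{\{i_1,\dots,i_k\}\subseteq[n]} w\bigl(A_{i_1}\cap\cdots\cap A_{i_k}\bigr)$ remains valid, and the identification $A_{i_1}\cap\cdots\cap A_{i_k} = \{S : S\subseteq X_{i_1}\cap\cdots\cap X_{i_k}\}$ gives $w(A_{i_1}\cap\cdots\cap A_{i_k}) = \psi(X_{i_1}\cap\cdots\cap X_{i_k})$, which is exactly the claimed formula. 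I do not anticipate a genuine obstacle; the only points needing care are the bookkeeping that translates the intersection of the set systems $A_{i_j}$ into the subsets of the vertex-set intersection $X_{i_1}\cap\cdots\cap X_{i_k}$, and the (routine) observation that inclusion--exclusion holds with cardinalities replaced by the polynomial weights $x^{|S|}$.
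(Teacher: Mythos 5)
Your proposal is correct and follows essentially the same route as the paper: both rest on the hereditary nature of general position sets, the consequent identity $\psi(X)=(1+x)^{|X|}$ for any general position set $X$, and an application of (weighted) inclusion--exclusion over the down-sets of the maximal general position sets. You simply spell out the weighted inclusion--exclusion bookkeeping that the paper leaves implicit.
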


\begin{proof}
Any subset of a general position set $X$ is also a general position set and the number of subsets of size $i$ is ${|X| \choose i}$. It follows that for every general position set $X$ we have $\psi(X) = (1+x)^{|X|}$. The formula then follows by the inclusion-exclusion principle. 
\end{proof}

As an example, consider the Petersen graph $P = K(5,2)$. In the standard drawing of it denote the consecutive vertices of the outer $5$-cycle by $u_0, u_1, u_2, u_3, u_4$, and their respective neighbors on the inner $5$-cycle by $u_0, u_1, u_2, u_3, u_4$. It is known from the seminal paper~\cite{manuel-2018a} that $\gp(P) = 6$. By inspection it can be checked that there are precisely five general position sets of cardinality $6$:  
\begin{align*}
& \{u_0,u_1,u_3,v_2,v_3,v_4\}, 
\{u_0,u_2,u_3,v_0,v_1,v_4\}, 
\{u_0,u_2,u_4,v_1,v_2,v_3\}, \\
& \{u_1,u_2,u_4,v_0,v_3,v_4\}, 
\{u_1,u_3,u_4,v_0,v_1,v_2\}\,.
\end{align*}
Moreover, the remaining maximal general position sets are the five independent sets of cardinality 4: 
\begin{align*}
& 
\{u_0,u_2,v_3,v_4\}, 
\{u_0,u_3,v_1,v_2\}, 
\{u_1,u_3,v_0,v_4\}, 
\{u_1,u_4,v_2,v_3\}, 
\{u_2,u_4,v_0,v_1\}\,.
\end{align*}
Since every vertex of $P$ lies in five different maximal general position sets, the intersection of six or more such sets is empty. In Table~\ref{tab:intersections} it is shown how many different occurrences of the same number of sets in an intersection have the same size of intersection.

\begin{table}[ht!]
\begin{center}
\begin{tabular}{|c||c|c|c|c|c|c|}\hline
\diagbox[width=10em]{no.\ of\\ maximal sets}{intersection size}&
0 & 1 & 2 & 3 & 4 & 5 \\ \hline\hline
2 & 5 & 10 & 0 & 30 & 0 & 0 \\ \hline
3&50&40&30&0&0&0\\\hline
4&160&50&0&0&0&0\\\hline
5&242&10&0&0&0&0\\\hline
6&210&0&0&0&0&0\\\hline
7&120&0&0&0&0&0\\\hline
8&45&0&0&0&0&0\\\hline
9&10&0&0&0&0&0\\\hline
10&1&0&0&0&0&0\\\hline
\end{tabular}
\caption{Number of different occurrences of the same number of maximal general position sets in an intersection having the same size of intersection.}
\label{tab:intersections}
\end{center}
\end{table}

Combining Proposition~\ref{prop:inclusion-exclusion} with Table~\ref{tab:intersections} yields: 
\begin{align*}
\psi(P) =\ & \left[5(x+1)^6+5(x+1)^4)\right] - \left[5(x+1)^0 + 10(x+1)^1 + 30(x+1)^3)\right] \\
& +\left[50(x+1)^0+40(x+1)^1+30(x+1)^2\right] - \left[160(x+1)^0+50(x+1)^1\right] \\ 
& + \left[242(x+1)^0+10(x+1)^1\right] -210+120-45+10-1 \\ 
=\ & 1 + 10x + 45x^2 + 90x^3 + 80x^4 + 30x^5+5x^6.
\end{align*}

To conclude the section we point out that the general position polynomial does not determine a graph uniquely. For example, $1 + 4 x + 6 x^2$ is a general position polynomial of both $P_4$ and $C_4$. Furthermore, the general position polynomial does not even determine a tree uniquely. For example, let $k \in \mathbb{N}$ and take $T_1^{(k)}$ to be the tree obtained from identifying one leaf of $P_{13k}$, $P_{5k}$ and $P_{4k}$, and $T_2^{(k)}$ to be the tree obtained from identifying one leaf of $P_{10k}$, $P_{9k}$ and $P_{3k}$. The case $k=1$ is shown in Fig.~\ref{fig:two-trees}.

\begin{figure}[ht!]
    \centering
    \begin{tikzpicture}
    [scale=0.5,
    vert/.style={circle,fill=black,draw=black, inner sep=0.05cm},
    s/.style={fill=black!15!white, draw=black!15!white, rounded corners},
    outvert/.style={rectangle,draw=black},
    outedge/.style={line width=1.5pt},
    dom_vert/.style={circle,draw=black,fill=white, inner sep=0.05cm}
    ] 
        \begin{scope}[xshift=0cm]
        \foreach \x in {1,...,12}
        \node[vert] (\x) at (\x, 0) {};
        \foreach \x [remember=\x as \lastx (initially 1)] in {2,...,12}
        \path (\x) edge (\lastx);

        \foreach \x in {14,...,17}
        \node[vert] (\x) at (\x, 0) {};
        \foreach \x [remember=\x as \lastx (initially 14)] in {15,...,17}
        \path (\x) edge (\lastx);
        
        \node[vert] (13) at (13,0) {};
        \draw (12) -- (13) -- (14);
        
        \node[vert] (18) at (13, -1) {};
        \node[vert] (19) at (13, -2) {};
        \node[vert] (20) at (13, -3) {};

        \draw (13) -- (18) -- (19) -- (20);
        \end{scope}

        \begin{scope}[yshift=-5cm]
        \foreach \x in {1,...,9}
        \node[vert] (\x) at (\x, 0) {};
        \foreach \x [remember=\x as \lastx (initially 1)] in {2,...,9}
        \path (\x) edge (\lastx);

        \foreach \x in {11,...,18}
        \node[vert] (\x) at (\x, 0) {};
        \foreach \x [remember=\x as \lastx (initially 11)] in {12,...,18}
        \path (\x) edge (\lastx);
        
        \node[vert] (10) at (10,0) {};
        \draw (9) -- (10) -- (11);
        
        \node[vert] (19) at (10, -1) {};
        \node[vert] (20) at (10, -2) {};

        \draw (10) -- (19) -- (20);
        \end{scope}
    \end{tikzpicture}
    \caption{Trees $T_1^{(1)}$ and $T_2^{(1)}$.}
    \label{fig:two-trees}
\end{figure}
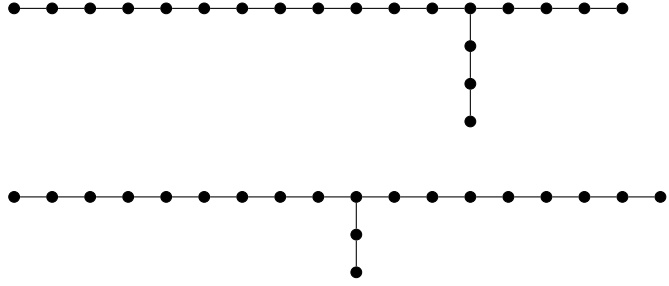

Both trees $T_1^{(k)}$ and $T_2^{(k)}$ have $20k$ vertices and three leaves, thus $\gp(T_1^{(k)}) = \gp(T_2^{(k)}) = 3$ by~\cite[Corollary 3.7]{manuel-2018a}. Observe that 
\begin{align*}
    \psi(T_1^{(k)}) & = 1 + 20 k x + \binom{20 k}{2} x^2 + 144 k^3 x^3,\\
    \psi(T_2^{(k)}) & = 1 + 20 k x + \binom{20 k}{2} x^2 + 144 k^3 x^3,
\end{align*}
where the coefficient at $x^3$ is obtained by taking one vertex from each pendent path. The key property that achieves the equality of polynomials is that $12 + 4 + 3 = 9 + 8 + 2$ and $12 \cdot 4 \cdot 3 = 9 \cdot 8 \cdot 2$. Note that this is not the only pair of triples with this property. 

\section{The general position polynomial of some graph operations}
\label{sec:operations}

In this section we consider the general position polynomials of disjoint unions of graphs, joins of graphs, and Cartesian products of graphs. 

Let $G\cupdot H$ denote the disjoint union of graphs $G$ and $H$. Then $S\subseteq V(G\cupdot H)$ is a general position set of $G\cupdot H$ if and only if $S\cap V(G)$ is a general position set of $G$ and $S\cap V(H)$ is a general position set of $H$. Using this fact, the following result readily follows. 

\begin{proposition}
\label{prop:disjoint-union}
If $G_1, \ldots, G_r$, $r\ge 2$, are graphs, then 
$$\psi(G_1 \cupdot \cdots \cupdot G_r) = \psi (G_1) \cdots \psi (G_r)\,.$$
\end{proposition}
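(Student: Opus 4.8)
The plan is to reduce to the two-factor case and then apply the characterisation of general position sets in a disjoint union stated just before the proposition, interpreting the resulting count as a Cauchy product of coefficients.

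First I would establish the case $r = 2$, that is, $\psi(G \cupdot H) = \psi(G)\psi(H)$. Write $\psi(G) = \sum_{j \ge 0} a_j x^j$ and $\psi(H) = \sum_{k \ge 0} b_k x^k$, so that $a_j$ (resp.\ $b_k$) counts the general position sets of $G$ (resp.\ $H$) of cardinality $j$ (resp.\ $k$). By the definition of polynomial multiplication, the coefficient of $x^i$ in the product $\psi(G)\psi(H)$ is $\sum_{j+k=i} a_j b_k$. On the other hand, let $c_i$ denote the coefficient of $x^i$ in $\psi(G \cupdot H)$, i.e.\ the number of general position sets of $G \cupdot H$ of cardinality $i$. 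It then remains only to show that $c_i = \sum_{j+k=i} a_j b_k$ for every $i$.

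The crux is the bijection supplied by the characterisation preceding the proposition: a set $S \subseteq V(G \cupdot H)$ is a general position set of $G \cupdot H$ if and only if $S \cap V(G)$ and $S \cap V(H)$ are general position sets of $G$ and $H$ respectively. Since $V(G)$ and $V(H)$ are disjoint, the map $S \mapsto (S \cap V(G),\, S \cap V(H))$ is a bijection between the general position sets of $G \cupdot H$ and the pairs $(S_G, S_H)$ where $S_G$ is a general position set of $G$ and $S_H$ is a general position set of $H$; moreover $|S| = |S_G| + |S_H|$. Partitioning these sets according to $j = |S_G|$ and $k = |S_H|$ immediately yields $c_i = \sum_{j+k=i} a_j b_k$, as required, which settles the case $r = 2$.

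Finally I would treat general $r$ by induction on the number of factors, using the associativity of disjoint union: writing $G_1 \cupdot \cdots \cupdot G_r = (G_1 \cupdot \cdots \cupdot G_{r-1}) \cupdot G_r$ and applying the two-factor case together with the induction hypothesis gives the product $\psi(G_1)\cdots\psi(G_r)$ over all $r$ factors. I do not expect any genuine obstacle here; the only point requiring care is to note that the displayed bijection respects cardinalities, so that the sizes of the two parts add, and it is precisely this additivity that converts the set count into the coefficientwise Cauchy product defining the product of the polynomials.
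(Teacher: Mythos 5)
Your proof is correct and follows exactly the route the paper intends: the paper states the characterisation of general position sets in a disjoint union and then declares the proposition to "readily follow," and your Cauchy-product bijection plus induction on $r$ is precisely the omitted verification.
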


Proposition~\ref{prop:disjoint-union} extends as follows. 

\begin{proposition}
\label{prop:product}
Let $G$ be a graph, $V_1,V_2$ a partition of $V(G)$, and  $G_1 = G[V_1]$, $G_2 = G[V_2]$. Then $\psi (G) = \psi (G_1)\psi (G_2)$ if and only if $G = G_1 \cupdot G_2$ or $G$ is complete. 
\end{proposition}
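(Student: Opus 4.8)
The plan is to treat the two implications separately, dispatching the straightforward direction first and then isolating the real difficulty in the converse.

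For the ``if'' direction, suppose first that $G = G_1 \cupdot G_2$; then $\psi(G) = \psi(G_1)\psi(G_2)$ is exactly Proposition~\ref{prop:disjoint-union} with $r=2$. Suppose instead that $G$ is complete. Then its induced subgraphs $G_1$ and $G_2$ are complete as well, so by Proposition~\ref{prop:examples}(i) we get $\psi(G_1)\psi(G_2) = (1+x)^{|V_1|}(1+x)^{|V_2|} = (1+x)^{|V(G)|} = \psi(G)$. This settles both cases.

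For the ``only if'' direction I would argue by contraposition: assuming $G \neq G_1 \cupdot G_2$ and $G$ not complete, I want to exhibit a coefficient in which $\psi(G)$ and $\psi(G_1)\psi(G_2)$ disagree. By Proposition~\ref{prop:disjoint-union}, $\psi(G_1)\psi(G_2) = \psi(H)$ for $H = G_1 \cupdot G_2$, so the task is to compare $\psi(G)$ with $\psi(H)$; here $G$ and $H$ share the same induced subgraphs on $V_1$ and on $V_2$ and differ only in the edges across the partition, of which $G$ has at least one. Concretely, the coefficient of $x^i$ in $\psi(H)$ counts the subsets $S$ whose traces $S \cap V_1$ and $S \cap V_2$ are in general position in $G_1$ and $G_2$ respectively, which must be compared with the number of genuine general position sets of $G$ of size $i$. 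By~\eqref{eq:first-3-terms} the coefficients of $x^0,x^1,x^2$ agree automatically, so the first informative test is at $x^3$, where each coefficient equals $\binom{|V(G)|}{3}$ minus the number of ``bad'' triples (triples on a common geodesic); thus $\psi(G)$ and $\psi(H)$ agree in degree $3$ iff $G$ and $H$ have equally many bad triples. I would split the bad triples of $G$ by how they meet the partition, the mixed ones being the defect created by the cross edges and the monochromatic ones being matched against the bad triples of $G_1$ and $G_2$. One clean sub-case drops out at once: if both $G_1,G_2$ are complete then $\deg\psi(H) = \gp(G_1)+\gp(G_2) = |V(G)|$ with leading coefficient $1$, whereas $G$ non-complete forces $\gp(G) < |V(G)|$, so the coefficient of $x^{|V(G)|}$ in $\psi(G)$ is $0$ and the two polynomials already differ at the top.

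The hard part is the complementary sub-case, and its source is that a cross edge can create shortcuts through the opposite part: a triple inside $V_1$ may lie on a common geodesic of $G$ while not doing so in $G_1$, and conversely. Hence there is \emph{no} containment between the general position sets of $G$ and the ``split'' sets counted by $\psi(H)$, the monochromatic bad-triple counts of $G$ genuinely differ from those of $G_1$ and $G_2$, and these signed discrepancies must be weighed against the strictly positive contribution of the mixed bad triples produced by a cross edge. Proving this balance to be nonzero---at $x^3$ when possible, and at a higher-degree coefficient if the degree-$3$ counts happen to coincide---is the crux, and the main obstacle is exactly the control of how across-partition edges reshape the within-part geodesics.

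Finally I would record that a connectivity hypothesis is needed: for disconnected $G$ the statement fails as written, since any disjoint union of cliques has $\psi(G) = (1+x)^{|V(G)|}$ for every partition. Assuming $G$ connected removes precisely this obstruction, because then any partition into two nonempty parts has a cross edge, so $G \neq G_1 \cupdot G_2$ and the claim reduces to the assertion that the identity forces $G$ to be complete.
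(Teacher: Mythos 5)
The ``if'' direction is fine and coincides with the paper's. The converse, however, is not proved: your proposal correctly reduces the problem to comparing bad-triple counts of $G$ and $H=G_1\cupdot G_2$, correctly disposes of the subcase where both $G_1,G_2$ are complete (via the top coefficient), and then explicitly stops at what you yourself call the crux --- showing that the loss of mixed general position triples caused by a cross edge cannot be exactly compensated by monochromatic triples that become general position in $G$ because cross edges create shortcuts. Identifying a difficulty is not the same as overcoming it, so as it stands the hard half of the equivalence is missing. The paper does not attempt your global count at all: it extracts from the $x^3$ equality the single claim that \emph{every} mixed triple (two vertices in one part, one in the other) must be in general position in $G$, and then runs a purely local propagation argument. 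If $uv$ is a cross edge with $u\in V_1$, $v\in V_2$, and $u'$ is a $G_1$-neighbour of $u$ not adjacent to $v$, then $u',u,v$ is a geodesic, so the mixed triple $\{u,u',v\}$ is not in general position; hence $v$ must be adjacent to $u'$, and iterating, $v$ is adjacent to all of $G_1$ (reachable from $u$). Then any non-adjacent pair $u_1,u_2$ in $G_1$ gives the non-general-position mixed triple $u_1,v,u_2$, forcing $G_1$ (and symmetrically $G_2$) to be a clique with all cross edges present. If you want to complete your argument you should switch from aggregate counting to this pointwise use of individual mixed triples; the aggregate route requires exactly the compensation estimate you could not supply.

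Two of your side observations deserve credit. First, the worry that a triple inside $V_1$ may be in general position in $G$ but not in $G_1$ is real (cross edges can shorten within-part distances), and it is in fact the same subtlety that makes the paper's one-sentence justification of the $x^3$ step less than airtight --- the paper's inequality $M\le T$ on mixed triples must be weighed against a possible increase in monochromatic general position triples, which the paper does not address explicitly. Second, your remark about connectivity is a genuine criticism of the statement as written: for $G=3K_2$ with $V_1$ consisting of one $K_2$ together with one endpoint of a second $K_2$, every subset of $V(G)$, $V_1$ and $V_2$ is in general position, so $\psi(G)=(1+x)^6=\psi(G_1)\psi(G_2)$ although $G$ is neither complete nor equal to $G_1\cupdot G_2$. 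So the proposition needs $G$ connected (or a similar hypothesis), and the paper's inductive step ``$v$ is adjacent to every vertex of $G_1$'' silently assumes it. None of this, however, repairs the fact that your proposal leaves the central implication unproved.
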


\begin{proof}
Sufficiency follows by Proposition~\ref{prop:disjoint-union} and Proposition~\ref{prop:examples}(i). 

Conversely, we need to prove that $\psi (G) = \psi (G_1)\psi (G_2)$ implies either that $G$ is a clique or that $G$ is the disjoint union of $G_1$ and $G_2$. Examine the subsets of order three of $V(G)$; in order to have equality in the $x^3$ term, we must have that every set of three vertices with two vertices in one of $G_1,G_2$ and one vertex in the other must be in general position. Suppose that there is an edge between $G_1$ and $G_2$ in $G$; we show that $G$ must be complete. Let $e$ be an edge from $G_1$ to $G_2$ with endpoints $u \in V(G_1)$ and $v \in V(G_2)$. Let $u^{\prime }$ be any neighbour of $u$ in $G_1$; as $\{ u,u^{\prime },v\} $ must be in general position, it follows that $v \sim u^{\prime }$. Inductively, we conclude that $u$ is adjacent to every vertex of $G_1$; furthermore, it follows that if $u_1$ and $u_2$ are non-adjacent vertices in $G_1$, then $u_1,v,u_2$ would not be in general position, so $G_1$ is a clique. Similarly $G_2$ must be a clique and we must have every edge between $G_1$ and $G_2$.  
\end{proof}

If $G$ and $H$ are disjoint graphs, then the {\em join} $G\vee H$ of $G$ and $H$ is the  graph with the vertex set $V(G\vee H) = V(G)\cup V(H)$, and the edge set $E(G \vee H) = E(G)\cup E(H)\cup \{xy:\ x\in V(G), y\in V(H)\}$.  Setting $\rho(G)$ to denote the maximum number of vertices in a union of pairwise independent cliques of $G$, it was proved in~\cite[Proposition 4.2]{ghorbani-2021} that $\gp(G \vee H )  = \max\{\omega(G) + \omega(H), \rho(G), \rho(H)\}$. 

The \emph{clique polynomial} $C(G)$ of a graph $G$ is the counting polynomial of cliques, that is, 
$$C(G) = c_0+c_1x+c_2x^2+\dots \,,$$ 
where $c_i$ is the number of cliques of order $i$ in $G$, cf.~\cite{hoede-1994}. Similarly, the \emph{independent union of cliques polynomial} $C_i(G)$ has coefficients equal to the number of independent union of cliques in $G$. Since a set $S \subseteq V(G_1 \vee G_2)$ is in general position if and only if either it induces a clique in both $G_1$ and $G_2$, or $S$ is an induced union of cliques in $G_1$ or $G_2$, the above discussion yields the following relation between the general position polynomial and the two clique polynomials. 

\begin{proposition}
If $G_1$ and $G_2$ are graphs, then 
\[\psi (G_1 \vee G_2) = (C(G_1)-1)(C(G_2)-1)+C_i(G_1)+C_i(G_2))-1.\]
\end{proposition}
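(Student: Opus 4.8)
The plan is to prove the identity by classifying the general position sets of $G_1 \vee G_2$ according to how they intersect the two vertex sets, and then translating each class into a term of one of the clique polynomials. The starting point is the structural characterisation already stated before the proposition: a set $S \subseteq V(G_1 \vee G_2)$ is in general position if and only if either $S$ induces a clique in both $G_1$ and $G_2$, or $S$ is contained in one factor and induces there a disjoint union of cliques. The first thing I would do is verify this characterisation carefully, since everything rests on it. The key geometric fact is that in the join, any two vertices are at distance at most $2$, and a vertex $x \in V(G_1)$ lies on a shortest path between $a,b \in V(G_2)$ precisely when $a,b$ are non-adjacent in $G_2$ (so their distance is $2$, realised through any common neighbour, in particular through $x$). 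Hence a set meeting both sides forces each side to be a clique (otherwise a non-edge on one side together with any vertex on the other side is a forbidden triple), while a set living inside one factor only sees the intrinsic geodesics of that factor, where the general position condition becomes the disjoint-union-of-cliques condition.

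Next I would set up the bookkeeping so that the clique and independent-union-of-cliques polynomials count exactly the right objects. Recall that $C(G) = \sum_i c_i x^i$ counts cliques of each order (with $c_0 = 1$ for the empty set), and $C_i(G)$ counts induced disjoint unions of cliques, again with constant term $1$. I would then split the general position sets $S$ into three disjoint families: (a) those with $S \cap V(G_1) \neq \emptyset$ and $S \cap V(G_2) \neq \emptyset$; (b) those entirely inside $V(G_1)$ (including $S = \emptyset$); and (c) those entirely inside $V(G_2)$. For family (a), the characterisation says $S$ is the union of a non-empty clique of $G_1$ and a non-empty clique of $G_2$, and since every such union is automatically a clique in the join, these sets are enumerated by $(C(G_1)-1)(C(G_2)-1)$, the $-1$'s removing the empty clique from each factor to enforce non-emptiness. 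Families (b) and (c) are enumerated by $C_i(G_1)$ and $C_i(G_2)$ respectively, each already including the empty set.

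Finally I would assemble the count, paying attention to the double-counting of the empty set. Families (a), (b), (c) as described are pairwise disjoint except that the empty set appears in both (b) and (c): it is the constant term of $C_i(G_1)$ and of $C_i(G_2)$. Summing the three generating polynomials therefore gives $(C(G_1)-1)(C(G_2)-1) + C_i(G_1) + C_i(G_2)$, in which the empty general position set is counted twice, so subtracting $1$ corrects the overcount and yields exactly $\psi(G_1 \vee G_2)$. This matches the claimed formula once the stray closing parenthesis in the statement is read as a typo.

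I expect the main obstacle to be the careful justification of the structural characterisation in family (a), specifically the claim that a general position set meeting both sides must induce a clique in \emph{each} factor rather than merely a union of cliques. The subtlety is that a non-edge within $V(G_1)$, say between $u_1$ and $u_2$, is harmless when $S$ lives only in $G_1$ (there it is part of a legitimate disjoint union of cliques), but becomes fatal the moment $S$ also contains a vertex $v \in V(G_2)$, because then $u_1, v, u_2$ with $u_1 u_2 \notin E$ forces $v$ onto a shortest $u_1$--$u_2$ path. I would make this precise by computing distances in the join and checking the geodesic membership condition for every triple type (two-plus-one across the factors, and all-three on one side), which is exactly the case analysis foreshadowed by the $x^3$-term argument in the proof of Proposition~\ref{prop:product}.
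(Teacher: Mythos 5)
Your proposal is correct and follows essentially the same route as the paper, which proves the proposition via exactly the characterisation you cite (a general position set of $G_1\vee G_2$ either induces a clique in both factors or is an induced disjoint union of cliques inside one factor) and then reads off the polynomial identity. You supply somewhat more detail than the paper does — in particular the distance-two geodesic argument justifying the characterisation and the explicit accounting for the doubly counted empty set, which is the source of the final $-1$ — but the decomposition and bookkeeping are the same.
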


We now turn our attention to the Cartesian product of graphs. Recall that the {\em Cartesian product} $G\cp H$ of graphs $G$ and $H$ has the vertex set  $V(G\cp H) = V(G)\times V(H)$ and the edge set $E(G\cp H)  = \{(g,h)(g',h'):\ gg'\in E(G)\mbox{ and } h=h', \mbox{ or, } g=g' \mbox{ and }  hh'\in E(H)\}$.

For the general position number of the Cartesian product of two paths we have (cf.~\cite{manuel-2018b}):

\begin{equation}
\label{eq:grids}
\gp(P_r \cp P_s) = \begin{cases}
    2; & r = s = 2,\\
    3; & r = 2, s \geq 3,\\
    4; & r, s \geq 3.
\end{cases}\end{equation}
Moreover, in~\cite[Theorem 2.1]{Klavzar-2021} it was proved that the number of general position sets in $P_r\cp P_s$ of cardinality $\gp(P_r\cp P_s)$ is equal to
\begin{equation}
\label{eq:grids-number-of}
\left\{
	\begin{array}{ll}
	\vspace*{2mm}
	6; & r = s = 2\,, \\
	\vspace*{2mm}
	\displaystyle{\frac{s(s-1)(s-2)}{3}}; & r = 2, s\ge 3\,, \\
	\displaystyle{\frac{rs(r - 1)(r - 2)(s - 1)(s - 2)(r(s - 3) - s + 7)}{144}}; & r, s\ge 3\,.
	\end{array}
	\right.
\end{equation}
From~\eqref{eq:grids-number-of} we immediately obtain the general position polynomial of thin grids: 

\begin{corollary}
    \label{thm:grids-small}
    If $r, s \geq 2$, then $$\psi(P_r\cp P_s) = \begin{cases}
        6x^2+4x+1; & r = s = 2,\\
        \frac{s(s-1)(s-2)}{3}x^3 + {2s \choose 2}x^2+2sx+1; & r = 2, s \geq 3.\\
    \end{cases}$$
\end{corollary}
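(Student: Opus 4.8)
The plan is to assemble the polynomial coefficient by coefficient, exploiting the remark at the start of Section~\ref{sec:basics} that the degree of $\psi(G)$ equals $\gp(G)$, together with the universal initial terms recorded in~\eqref{eq:first-3-terms} and the top-degree count supplied by~\eqref{eq:grids-number-of}. Since $P_r \cp P_s$ has order $rs$, equation~\eqref{eq:first-3-terms} already pins down $a_0 = 1$, $a_1 = rs$, and $a_2 = \binom{rs}{2}$ in both cases, so the only coefficients that require separate treatment are those in degree $\gp(P_r \cp P_s)$ and above.

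For the case $r = s = 2$, I would first invoke~\eqref{eq:grids} to get $\gp(P_2 \cp P_2) = 2$, so $\psi(P_2 \cp P_2)$ has degree exactly $2$. Hence the three terms given by~\eqref{eq:first-3-terms} with order $rs = 4$ already exhaust the polynomial, producing $1 + 4x + \binom{4}{2}x^2 = 6x^2 + 4x + 1$. As a sanity check, the top count $6$ in the first line of~\eqref{eq:grids-number-of} matches $\binom{4}{2}$, confirming that $a_2$ is consistently obtained from either source.

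For the case $r = 2$, $s \geq 3$, equation~\eqref{eq:grids} yields $\gp(P_2 \cp P_s) = 3$, so $\psi(P_2 \cp P_s)$ has degree exactly $3$ and $a_i = 0$ for every $i \geq 4$. The coefficients $a_0, a_1, a_2$ are again read off from~\eqref{eq:first-3-terms} with $rs = 2s$, namely $1$, $2s$, and $\binom{2s}{2}$. The remaining coefficient $a_3$ counts the general position sets of cardinality $3$, and since $\gp(P_2 \cp P_s) = 3$ these are precisely the maximum general position sets; their number is the second line of~\eqref{eq:grids-number-of}, equal to $\frac{s(s-1)(s-2)}{3}$. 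Collecting the four coefficients gives the claimed expression.

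I expect no genuine obstacle here: in both cases the content reduces to observing that $\gp(P_r \cp P_s) \le 3$ forces the whole polynomial to be determined by its universal initial segment~\eqref{eq:first-3-terms} and its single leading coefficient~\eqref{eq:grids-number-of}. The only point meriting explicit care is that, because $\gp \le 3$, the count of maximum sets in~\eqref{eq:grids-number-of} is exactly $a_3$ (resp.\ $a_2$); there is no gap between the terms of degree at most $2$ and the top term of degree $\gp$, so no intermediate coefficient is left undetermined and the assembly is complete.
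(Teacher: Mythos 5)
Your proposal is correct and follows essentially the same route as the paper, which presents this corollary as an immediate consequence of~\eqref{eq:grids}, \eqref{eq:grids-number-of}, and the universal initial terms~\eqref{eq:first-3-terms}. Your explicit observation that $\gp(P_r\cp P_s)\le 3$ leaves no intermediate coefficients undetermined is exactly the point that makes the paper's ``immediately obtain'' justified.
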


For larger grids we have: 

\begin{theorem}
    \label{thm:grids-large}
    If $r, s \geq 3$, then 
    \begin{align*}
        \psi(P_r\cp P_s) & = 
        \frac{rs(r - 1)(r - 2)(s - 1)(s - 2)(r(s - 3) - s + 7)}{144} \; x^4 \\ & + \frac{1}{18}(r-1)r(s-1)s(r(2s-1)-s-4) \; x^3 \\ & + {rs \choose 2} x^2 + rs x + 1.
    \end{align*}
\end{theorem}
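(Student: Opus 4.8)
The plan is to pin down the polynomial coefficient by coefficient. Since $\gp(P_r\cp P_s)=4$ for $r,s\ge 3$ by~\eqref{eq:grids}, the polynomial has degree $4$. The constant, linear, and quadratic coefficients are $1$, $rs$, and $\binom{rs}{2}$ by~\eqref{eq:first-3-terms}, while the leading ($x^4$) coefficient is exactly the number of maximum general position sets, which is supplied by~\eqref{eq:grids-number-of}. Thus the only genuine work is to determine the $x^3$ coefficient, i.e.\ the number of general position sets of size three, which I would obtain by subtracting the number of \emph{bad} triples (three vertices lying on a common geodesic) from $\binom{rs}{3}$.

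To count bad triples I would first recall that in $P_r\cp P_s$ the distance between $(x_1,y_1)$ and $(x_2,y_2)$ is $|x_1-x_2|+|y_1-y_2|$, and that a vertex $w$ lies on a geodesic between $u$ and $v$ precisely when $w$ lies in the closed bounding box (the smallest axis-parallel rectangle) determined by $u$ and $v$. The key structural observation is that a bad triple has a unique \emph{middle} vertex: if $\{p_1,p_2,p_3\}$ is bad, exactly one $p_i$ satisfies $d(p_j,p_i)+d(p_i,p_k)=d(p_j,p_k)$, since if two of them were simultaneously middles, adding the two distance equations would force two of the points to coincide. Uniqueness of the middle guarantees that summing, over all unordered pairs $\{u,v\}$ of distinct vertices, the number of lattice points $w\neq u,v$ inside the bounding box of $u,v$ counts each bad triple exactly once.

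With this in hand, for a pair at coordinate differences $p=|x_1-x_2|$ and $q=|y_1-y_2|$ the bounding box contains $(p+1)(q+1)$ vertices, so it contributes $(p+1)(q+1)-2$ interior vertices. Hence the number of bad triples equals $\sum_{\{u,v\}}\big[(p+1)(q+1)-2\big]$. Because the grid is a product and $p$ depends only on the first coordinates while $q$ depends only on the second, passing to ordered pairs makes the sum factor: writing $A(n)=\sum_{x_1,x_2\in[n]}(|x_1-x_2|+1)$, one has $\sum_{\text{ordered }(u,v)}(p+1)(q+1)=A(r)A(s)$. A short computation using the standard identity $\sum_{x_1,x_2\in[n]}|x_1-x_2|=\tfrac{n(n^2-1)}{3}$ gives $A(n)=\tfrac{n(n^2+3n-1)}{3}$. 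Subtracting the diagonal ($u=v$, which contributes $rs$) and halving yields $\sum_{\{u,v\}}(p+1)(q+1)=\tfrac12\big(A(r)A(s)-rs\big)$, so the number of bad triples is $\tfrac12\big(A(r)A(s)-rs\big)-2\binom{rs}{2}$.

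The final step is to substitute $A(r),A(s)$ and simplify $\binom{rs}{3}$ minus the above expression, verifying that it collapses to $\tfrac{1}{18}(r-1)r(s-1)s\big(r(2s-1)-s-4\big)$. I expect the main obstacle to be not any single idea but this last algebraic reduction together with the care needed in the uniqueness-of-middle argument, which is exactly what legitimises counting each bad triple once; it is prudent to test the closed form against a small case such as $r=s=3$ (where it predicts $16$ general position triples, matching $\binom{9}{3}-68=16$) before trusting the general simplification.
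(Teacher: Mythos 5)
Your proposal is correct, and it reaches the $x^3$ coefficient by a genuinely different route than the paper. Both proofs share the same skeleton: degree $4$ from~\eqref{eq:grids}, the leading coefficient from~\eqref{eq:grids-number-of}, the trivial low-order coefficients, and complementary counting of ``bad'' triples for the cubic term. The difference lies in how the bad triples are counted. The paper partitions them into three geometric configurations (all three vertices in one layer; exactly two in a common layer; a strictly monotone ``staircase'' triple) and evaluates several nested sums, including an inclusion--exclusion correction $-4\binom{r}{2}\binom{s}{2}$ for double-counted pairs. You instead observe that in the $L^1$ metric of the grid a bad triple has a \emph{unique} middle vertex (your two-equations argument for this is sound: two simultaneous middles would force $2d(p_1,p_2)=0$), so the bad triples are counted exactly once by summing, over unordered pairs $\{u,v\}$, the quantity $(p+1)(q+1)-2$, where $(p+1)(q+1)$ is the size of the bounding box. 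The product structure of the metric then factors the sum as $A(r)A(s)$ with $A(n)=\tfrac{n(n^2+3n-1)}{3}$, and the resulting expression
$$\binom{rs}{3}-\left[\tfrac12\bigl(A(r)A(s)-rs\bigr)-2\binom{rs}{2}\right]=\frac{rs}{18}\bigl(2r^2s^2-3r^2s-3rs^2+r^2+s^2+3r+3s-4\bigr)$$
does expand to $\tfrac{1}{18}(r-1)r(s-1)s\bigl(r(2s-1)-s-4\bigr)$, as I have checked, and your spot-check at $r=s=3$ (sixteen triples) agrees. Your approach buys a shorter, essentially case-free computation that exploits the metric product structure; the paper's approach makes the geometry of the degenerate triples more explicit at the cost of heavier bookkeeping. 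Either way the coefficient is established, so the proposal is a complete and valid proof.
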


\begin{proof}
    Let $r,s\geq 3.$ It follows from~\eqref{eq:grids} that $\psi(P_r\cp P_s)$ is of degree 4. From~\eqref{eq:grids-number-of} we get the leading coefficient, while coefficients of $x^2, x^1$ and $x^0$ are obviously ${rs \choose 2}, rs$ and 1, respectively. Consider the number of general position sets with three vertices. There are ${rs \choose 3}$ 3-subsets of $V(P_r\cp P_s)$, but some of them are not in general position. In the following, we count the number of 3-subsets of $V(P_r\cp P_s)$ that are not in general position. 
    \begin{enumerate}
        \item There are $r {s \choose 3}$ and $s {r \choose 3}$ sets where all vertices are in the same horizontal or vertical layer.
        \item Consider the case where  exactly two are in the same horizontal layer (the case where they are in the same vertical layer is similar). Suppose that the second coordinate is $k$. The one with smaller first coordinate has $r-1$ possibilities; suppose it is $i$, while the one with greater coordinate can be in $\{i+1,\ldots,r\}$, say $j$. Since the triplet is not in general position, the third vertex can have any other second coordinate (any of $\{1,\ldots, k-1,k+1,\ldots,s\}$), and for its first coordinate $x$ either $x\leq i$ or $x\geq j$. Using the same reasoning for the case where two coordinates are in the same vertical layer and subtract the cases where both of them are the same, we obtain that in this case the number of sets that are not in general position is equal to
        \begin{align*}
        & r\sum_{i=1}^{s-1}\sum_{j=i+1}^s(s-(j-i-1))(r-1) + \\ & s\sum_{i=1}^{r-1}\sum_{j=i+1}^r(r-(j-i-1))(s-1) -  4 {r \choose 2} {s \choose 2}.
        \end{align*}
        \item The last case is vertices $(x_1,y_1)$, $(x_2,y_2)$ and $(x_3,y_3)$, where $x_1<x_2<x_3$ and either $y_1<y_2<y_3$ or $y_1>y_2>y_3$. There are: $$2\sum_{i=1}^{r-2}\sum_{j=1}^{s-2}\sum_{k=i+2}^{r}\sum_{l=j+2}^{s}(k-i-1)(l-j-1)$$ such sets.
    \end{enumerate}
     
    By subtracting from the number of all sets the number of sets that are not in general position, we get this simplified expression:    
    $$\frac{1}{18}(r-1)r(s-1)s(r(2s-1)-s-4),$$    
    representing the coefficient of the $x^3$ term in the general position polynomial.
\end{proof}


\section{Unimodality}
\label{sec:unimodality}


In this section we consider the unimodality of the general position polynomial. First, it is not unimodal in general as shown by the following example, which follows from Proposition~\ref{prop:examples}(v): 
$$\psi(K_{8,4}) = 1+12x^1+66x^2+60x^3+71x^4+56x^5+28x^6+8x^7+x^8\,.$$
Another complete bipartite graph for which the general position polynomial is not unimodal is $K_{9,7}$. 

In view of the above example and the situation with the independence polynomial, we can ask ourselves whether the general position polynomial is unimodal on trees. The answer is negative as we now demonstrate. 

A broom $B_{s,r}$, $s \geq 0$, $r \geq 0$, is a graph with vertices $u_0, \ldots, u_s, v_1, \ldots, v_r$, and edges $u_i u_{i+1}$ for $i \in \{0,\ldots, s-1\}$ and $u_0 v_j$ for $j \in [r]$. See Fig.~\ref{fig:broom} for an example.

\begin{figure}[htb]
    \centering
    \begin{tikzpicture}
    [scale=1,
    vert/.style={circle,fill=black,draw=black, inner sep=0.05cm},
    s/.style={fill=black!15!white, draw=black!15!white, rounded corners},
    outvert/.style={rectangle,draw=black},
    outedge/.style={line width=1.5pt},
    dom_vert/.style={circle,draw=black,fill=white, inner sep=0.05cm}
    ] 
        \begin{scope}
        \foreach \x in {0,...,4}
        \node[vert, label=below:$u_{\x}$] (\x) at (\x, 0) {};
        \foreach \x [remember=\x as \lastx (initially 0)] in {1,...,4}
        \path (\x) edge (\lastx);

        \foreach \x in {1,...,6}
        \node[vert, label=left:$v_{\x}$] (\x+10) at (-2, -1/2*\x+1/2*3.5) {};
        \foreach \x in {1,...,6}
        \path (\x+10) edge (0);
        
        \end{scope}
    \end{tikzpicture}
    \caption{The broom $B_{4,6}$.}
    \label{fig:broom}
\end{figure}
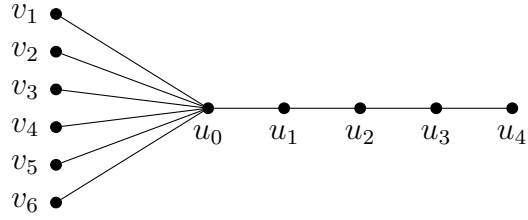

It is straightforward to check that 
\begin{align*}
\psi(B_{s,r}) & = \sum_{k \geq 0} b_k x^k \\
& = 1 + (s+r+1) x + \binom{s+r+1}{2} x^2 + \sum_{k \geq 3} \left( s \binom{r}{k - 1} + \binom{r}{k}\right)x^k\,.
\end{align*}
The smallest broom whose general position polynomial is not unimodal is $B_{17,6}$; its general position polynomial is
$$\psi(B_{17,6}) = 1 + 24 x + 276 x^2 + 275 x^3 + 355 x^4 + 261 x^5 + 103 x^6 + 17 x^7\,.$$
Moreover, one can calculate that if 
$$r \geq 6\quad {\rm and}\quad s \geq \left \lceil \frac{1}{2} \left(r^2-3 r-1\right)+\frac{\sqrt{3 r^4-14 r^3-3 r^2+14 r+3}}{2\sqrt{3}} \right \rceil\,,$$ 
then 
$b_1 < b_2 > b_3 < b_4$ holds, and hence there are infinitely many brooms for which the general position polynomial is not unimodal.

On the positive side, we first prove that the general position polynomial of comb graphs are unimodal. Recall that the {\em comb} $G_n$, $n\ge 1$, is obtained from the path $P_n$ by respectively attaching a pendent vertex to each of its vertices. 

\begin{theorem}
\label{thm:combs}
If $n\ge 1$, then $\psi(G_n)$ is unimodal.
\end{theorem}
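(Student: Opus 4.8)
The plan is to first obtain an explicit formula for the general position polynomial of the comb $G_n$, then prove unimodality of the resulting coefficient sequence directly. The comb $G_n$ has $2n$ vertices: the $n$ ``spine'' vertices forming the path $P_n$, and $n$ attached ``teeth'' (leaves). So the first task is to understand the structure of general position sets in $G_n$. My first step would be to analyse which triples lie on a common geodesic. Since a tooth $t_i$ attached to spine vertex $p_i$ is a leaf, a geodesic passing through $t_i$ must have $t_i$ as an endpoint, so $t_i$ never serves as a middle vertex of a geodesic. The forbidden configurations are therefore: three spine vertices on the spine path (any three collinear spine vertices fail, exactly as in $P_n$, but note $\gp(P_n)=2$), and configurations involving a spine vertex lying between two other chosen vertices along a shortest path that may route through teeth.

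\textbf{Deriving the polynomial.} The key combinatorial observation I would establish is a clean characterisation: a set $S$ is in general position iff for every spine vertex $p_i \in S$, the vertex $p_i$ is not an internal vertex of any geodesic between two other elements of $S$. Because the spine is a path, a convenient reformulation is that the chosen spine vertices must themselves be in general position on $P_n$ (so at most two of them, and if two they may be arbitrary), together with constraints coupling teeth to spine vertices ``behind'' them. I expect the characterisation to reduce to: $S$ is a general position set iff the set of \emph{occupied columns} (columns being the pairs $\{p_i,t_i\}$) whose spine vertex is selected forms a general position set of $P_n$, while teeth may be added more freely. Carrying out this case analysis carefully and summing over the choices should yield a closed form $\psi(G_n)=\sum_k c_k x^k$ with the $c_k$ expressed via binomial coefficients in $n$ and polynomial factors in $n$, analogous to the broom formula already displayed in the excerpt.

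\textbf{Proving unimodality.} Once the explicit coefficient sequence $(c_k)$ is in hand, I would prove it rises then falls. The natural approach is to examine the ratio $c_{k+1}/c_k$ (or the difference $c_{k+1}-c_k$) and show it crosses the threshold $1$ (respectively $0$) exactly once as $k$ increases. If the dominant term in $c_k$ is of the form $(\text{polynomial in }n)\cdot\binom{n}{k}$ or a sum of a few such binomial terms, then the ratio is a rational function of $k$ that is monotone, giving a single sign change and hence unimodality. I would handle small $k$ (namely $k=0,1,2$, where \eqref{eq:first-3-terms} pins down $c_0=1$, $c_1=2n$, $c_2=\binom{2n}{2}$) separately if the general formula only governs $k\ge 3$.

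\textbf{The main obstacle.} I expect the hard part to be the unimodality argument rather than the derivation of the polynomial. Unlike the broom case, where the authors only needed to \emph{exhibit} non-unimodality via a single local violation $b_1<b_2>b_3<b_4$, here one must rule out \emph{all} violations for \emph{every} $n$ simultaneously. Controlling the ratio $c_{k+1}/c_k$ uniformly in both $k$ and $n$ is delicate when $c_k$ is a sum of several binomial-times-polynomial terms with competing growth rates, since the location of the mode drifts with $n$ and the difference $c_{k+1}-c_k$ need not factor nicely. The safest route is to bound or exactly sign the relevant differences by grouping terms, possibly splitting into the regime $k$ small versus $k$ near the degree, and to use log-concavity of the individual binomial coefficients $\binom{n}{k}$ as a lever; if the sequence is in fact log-concave one gets unimodality for free, so a secondary goal would be to check whether log-concavity holds and can be leveraged to shorten the argument.
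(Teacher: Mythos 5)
Your plan follows the paper's proof essentially step for step: the authors likewise count general position sets of size $k\ge 3$ according to the number of spine vertices they contain (zero, one, or two, since three spine vertices are never in general position on $P_n$), obtain a closed binomial formula for $a_k$, and then show that consecutive differences change sign exactly once --- and the delicacy you anticipate in that last step does not materialise, because the difference simplifies to the single factored expression $a_k-a_{k+1}=\frac{4\,n!\,(2k-n+1)}{(k+1)!\,(n-k)!}$ for $3\le k\le n-1$, whose sign is governed entirely by $2k-n+1$. The one detail your outline omits is that the initial segment is increasing ($a_2\le a_3$) only when $n\ge 6$, so the combs with $n\le 5$ must be checked separately, as the paper does.
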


\begin{proof}
    Let $\psi(G_n) = \sum_{k \geq 0} a_k x^k$. Clearly, $a_0 = 1$, $a_1 = 2n$, and $a_2 = \binom{2n}{2}$. For $k \geq 3$, we can determine $a_k$ by distinguishing between zero, one or two vertices from the general position set belonging to the path $P_n$ in $G$:
    \begin{align*}
        a_k & = \binom{n}{k} + \sum_{i=1}^n \left( \binom{i-1}{k-1} + \binom{n-i}{k-1} \right) + \sum_{i = 1}^{n-1} \sum_{j=i+1}^n \binom{j-i-1}{k-2}\\
         & = \frac{1}{k} \left( \frac{(n-1) n}{k-1} \binom{n-2}{k-2}+n \binom{n-1}{k-1}+(n-k+1) \binom{n}{k-1}+k \binom{n}{k} \right).
    \end{align*}
    In particular, $a_3 = \frac{2}{3} (n-2) (n-1) n$, and $a_3 \geq a_2$ if and only if $n \geq 6$. 

    If $n = 1$, then $G_1 = K_2$, while if $n = 2$, then $G_2 = P_4$, and their polynomials are unimodal. For $n \in \{3,4,5\}$, we calculate the polynomials explicitly to check that they are also unimodal:
    \begin{align*}
        \psi(G_3) & = 4 x^3+ 15 x^2+6 x+1\\
        \psi(G_4) & = 4 x^4+16 x^3+ 28 x^2+8 x+1\\
        \psi(G_5) & = 4 x^5+20 x^4+40 x^3+ 45 x^2+10 x+1
    \end{align*}

    For $n \geq 6$, we already know that $a_0 \leq a_1 \leq a_2 \leq a_3$, thus it only remains to show that the sequence $(a_k)_{k \geq 3}$ is unimodal. 
    The difference between two terms in $\psi(G)$ for $3 \leq k \leq n-1$ can be simplified as follows:
    $$a_k - a_{k+1} = \frac{4 n! (2 k-n+1)}{(k+1)! (n-k)!},$$
    which implies that for $3 \leq k \leq \frac{n-1}{2}$, $a_k \leq a_{k+1}$, and that for $\frac{n-1}{2} \leq k \leq n-1$, $a_k \geq a_{k+1}$, so $(a_k)_{k \geq 3}$ is indeed unimodal.
\end{proof}

Another family of graphs for which the general position polynomial is unimodal is the class of Kneser graphs $K(n,2)$. Recall that the vertex set of $K(n,2)$ contains all $2$-subsets of an $n$-set, two vertices being adjacent if the corresponding sets are disjoint. At the end of Section~\ref{sec:basics} we have considered the special case $P = K(5,2)$. 

\begin{theorem}
    \label{thm:kneser}
    If $n\ge 2$, then $\psi(K(n,2))$ is unimodal.
\end{theorem}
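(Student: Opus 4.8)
The plan is to pass to the natural description of the vertices of $K(n,2)$ as the edges of the complete graph $K_n$, so that a set $S$ of vertices becomes a graph $H$ on the ground set $[n]$, with two vertices adjacent in $K(n,2)$ exactly when the corresponding edges of $K_n$ are disjoint. For $n\ge 5$ the graph $K(n,2)$ has diameter $2$, and the only geodesics through three distinct vertices have the form $A-B-C$ with $|A\cap C|=1$ and $B$ disjoint from $A\cup C$. Translating this, a triple fails to be in general position precisely when, viewed as edges of $K_n$, two of them share a vertex (a \emph{cherry}) while the third is disjoint from that cherry. Hence $S$ is a general position set if and only if $H$ contains no cherry together with an edge disjoint from it; equivalently, every cherry of $H$ is a vertex cover of $H$. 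The tiny cases $n\le 4$ (where the diameter is not $2$) are trivial and are dealt with directly, as is the Petersen case $n=5$ computed in Section~\ref{sec:basics}.

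The structural heart of the argument is a classification of these graphs. First I would prove that a graph $H$ on $[n]$ satisfies the condition above if and only if $H$ is one of the following: (a) a matching; (b) a star (all edges through a single vertex); or (c) a subgraph of $K_4$ (all edges inside some four-element subset of $[n]$). The forward direction of each type is a short check: matchings have no cherry, every cherry of a star runs through its centre, and in a subgraph of $K_4$ at most one vertex lies outside any given cherry, so no disjoint edge can survive. For the converse I would assume $H$ has a cherry on $\{a,b,c\}$, which forces $\{a,b,c\}$ to be a vertex cover, and then examine its vertex-cover number: if a vertex outside the cover is adjacent to two of $a,b,c$, or if two distinct cover vertices carry private pendant edges, one immediately exhibits a cherry disjoint from an edge, and the surviving configurations collapse onto (b) and (c).

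Using the classification I would read off an explicit formula for the coefficients. For $k\ge 3$ the three families are disjoint except for the star $K_{1,3}$ (which I assign to the star family), so that $a_k=m_k+s_k+\kappa_k$, where $m_k=n!/\bigl(2^k\,k!\,(n-2k)!\bigr)$ counts matchings (and is $0$ once $2k>n$), $s_k=n\binom{n-1}{k}$ counts stars, and $\kappa_k$ is a correction supported on $3\le k\le 6$ arising from the genuinely $K_4$-type graphs, namely $\kappa_k=c_k\binom{n}{4}$ with $(c_3,c_4,c_5,c_6)=(12,15,6,1)$, together with an extra $\binom{n}{3}$ triangles when $k=3$. For $k\le 2$ one simply has $a_0=1$, $a_1=\binom{n}{2}$, $a_2=\binom{\binom{n}{2}}{2}$. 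I would verify this formula against the Petersen polynomial as a sanity check (it reproduces $1+10x+45x^2+90x^3+80x^4+30x^5+5x^6$).

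Finally, unimodality. The matching sequence $(m_k)$ is log-concave (the matching polynomial is real-rooted) and the star sequence $(s_k)$ is log-concave (binomial), but sums of log-concave sequences need not be unimodal, so the real work is controlling the interplay of the three parts. I would locate the mode and show that $a_k-a_{k+1}$ changes sign exactly once, splitting the range of $k$: for $k$ up to the peak of $(m_k)$ all three parts are nondecreasing for large $n$, while for $k>\lfloor n/2\rfloor$ only the decreasing star tail $s_k$ remains. The delicate zone is $k\approx n/2$, where $(m_k)$ has passed its peak and is falling while $(s_k)$ is still rising; here I would use the estimate $m_k/s_k=(n-1-k)!/\bigl(2^{k}(n-2k)!\bigr)$, a factorial-versus-exponential ratio, to show the matching term dominates and its decrease overwhelms the increase of the star term, keeping the sum decreasing. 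The main obstacle is exactly this transition, compounded by the fact that for small and moderate $n$ the correction $\kappa_k$ is comparable to the other terms and pulls the mode down to $k\in\{3,4\}$; I would therefore fix a threshold beyond which the dominance estimates are valid, settle the remaining finite set of small $n$ by direct computation, and treat the large-$n$ regime by the ratio bounds above.
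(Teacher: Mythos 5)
Your proposal follows essentially the same route as the paper: it arrives at the identical coefficient formula (matchings of $K_n$ as cliques, stars as independent sets, plus the $\binom{n}{3}$ and $\binom{n}{4}$-type corrections supported on $3\le k\le 6$), and then establishes unimodality by showing the consecutive differences $a_k-a_{k+1}$ change sign exactly once, with the delicate window near $k\approx \frac{n}{2}-\frac{\sqrt{n}}{2}$ handled by showing the decrease of the matching term dominates the increase of the star term, and small $n$ checked directly. The paper does exactly this (its cases are $k>\frac{n}{2}$, $\frac{n}{2}-1\le k\le\frac{n}{2}+1$, $\frac{n}{2}-\frac{\sqrt{n+1}}{2}\le k\le \frac{n}{2}-1$, and $7\le k\le \frac{n}{2}-\frac{\sqrt{n+2}}{2}$, verifying no integer falls in the gap), so your plan is sound; the only part you would still need to make fully precise is the dominance estimate in the transition zone, which concerns the ratio of \emph{differences} rather than of the terms $m_k/s_k$ themselves.
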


\begin{proof}
We first determine the general position polynomial of $K(n,2)$. Recall from~\cite{manuel-2018a} that $$\gp(K(n,2)) = \begin{cases}
    6; & n \leq 6,\\
    n-1; & n \geq 7.
\end{cases}$$
General position sets of size $j \in \{2, \ldots, n-1\}$ can be cliques or independent sets, and for $j \geq 7$ this is the only possibility. To form a clique on $j$ vertices in $K(n,2)$, select $2j$ elements of the $n$-set (this can be done in $\binom{n}{2j}$ ways) and put them into unordered pairs ($\binom{2j}{2,\ldots,2} \frac{1}{j!} = \frac{(2j)!}{2^j j!}$ options). An independent set on $j$ vertices can be of the form $\{ax_1, \ldots, ax_j\}$, where $a, x_1, \ldots, x_j$ are distinct elements of the $n$-set. There are $n \binom{n-1}{j}$ such sets. For $j \geq 4$, all independent sets are of this form. However, for $j = 3$, independent sets can also take the form $\{ab, bc, ac\}$, where $a,b,c$ are distinct elements of the $n$-set. For $j \in \{ 3, \ldots, 6\}$, several additional types of general position sets are possible. On six vertices, $3K_2$ forms a general position set, and there are $\binom{n}{4}$ such sets (they are of the form $\{ab, cd, ac, bd, ad, bc\}$). On five vertices, $2K_2 \cupdot K_1$ is also a general position set, and it can be obtained by removing one vertex from the general position set on six vertices, thus there are $6 \binom{n}{4}$ of them. Similarly, on four vertices, $2K_2$ or $K_2 \cupdot 2K_1$ are also independent sets. They are obtained by removing two vertices from the general position set on six vertices, so there are $\binom{6}{2} \binom{n}{4} = 12 \binom{n}{4}$ of them. On three vertices, we need to consider the additional type of independent sets as well, and there are $\binom{n}{3}$ of them. By removing three vertices from a general position set on six vertices we can also obtain a set of three vertices in general position that induces a copy of $K_2 \cupdot K_1$, but these vertices must not belong to three copies of different $K_2$. Thus there are $\left( \binom{6}{3} - 2^3 \right) \binom{n}{4} = 12 \binom{n}{4}$. Therefore: 
\begin{align*}
    \psi(K(n,2)) & = \sum_{k = 0}^{n-1} a_k x^k \\
    & = 1 + \binom{n}{2} x + \left( \binom{n}{3} + 12 \binom{n}{4} \right) x^3 + 15 \binom{n}{4} x^4 + 6 \binom{n}{4} x^5 + \binom{n}{4} x^6  \\
    &\quad + \sum_{j=2}^{n-1} \left( \binom{n}{2 j} \frac{ (2 j)!}{2^{j} j!}+n \binom{n-1}{j} \right) x^j\,.
\end{align*}

We can check by computer that $\psi(K(n,2))$ is unimodal for $2 \leq n \leq 16$. For $n \geq 17$, the following inequalities hold: $a_0 \leq a_1 \leq \cdots \leq a_6 \leq a_7$. Thus it remains to show that $(a_k)_{k \geq 7}$ is unimodal. Since $k \geq 7$, $a_k = n \binom{n-1}{k} + \frac{(2k)!}{2^k k!} \binom{n}{2k}$. To show unimodality, we simplify the difference 
\begin{align*}
a_k - a_{k+1} = & \frac{\binom{n}{k+1}}{2^{k+1}} \Bigl( 2^{k+1} (2k+2-n) \\
& + (n+2-(n-2k)^2) (n-2k+1)(n-2k+2) \cdots (n-k-1) \Bigr)\,.
\end{align*}
To determine for which $k$ the difference $a_k - a_{k+1} \geq 0$ and for which $k$ it is $a_k - a_{k+1} \leq 0$, we only need to consider the terms
\begin{align*}
A & = 2^{k+1} (2k+2-n), \\
B & = (n+2-(n-2k)^2) (n-2k+1)(n-2k+2) \cdots (n-k-1)\,.    
\end{align*}
First observe that if $k > \frac{n}{2}$, then $\frac{(2k)!}{2^k k!} \binom{n}{2k} = 0$, thus $a_k - a_{k+1} = \frac{\binom{n}{k+1}}{2^{k+1}} \cdot A > 0$. 

If $\frac{n}{2} - 1 \leq k \leq \frac{n}{2}+1$, then $A \geq 0$, and since $n+2-(n-2k)^2 \geq 0$ and for all $j \in [k-1]$, $n-2k+j \geq 0$, we also have $B \geq 0$. Thus $a_k - a_{k+1} \geq 0$.

For $\frac{n}{2} - \frac{\sqrt{n+1}}{2} \leq k \leq \frac{n}{2} - 1$, we have $A \leq 0$ and $B \geq 0$. In the following we will prove that $B \geq |A|$. Observe that $n+2-(n-2k)^2 \geq 1$, $n-2k+1 > |2k+2-n|$ and for all $j \in \{2,\ldots, k-1\}$, $n-2k+j \geq 4$. Thus $$B \geq 1 \cdot |2k+2-n| \cdot 4^{k-2} > |2k+2-n| \cdot 2^{k+1} = |A|$$ since $k \geq 7$. Hence we have $a_k - a_{k+1} \geq 0$. 

For $7 \leq k \leq \frac{n}{2} - \frac{\sqrt{n+2}}{2}$, we have $A < 0$, $n+2-(n-2k)^2 \leq 0$ and for all $j \in [k-1]$, $n-2k+j \geq 0$, thus $B \leq 0$. It follows that $a_k - a_{k+1} < 0$. 

It remains to prove that the above cases indeed cover all integers $k$, $7 \leq k \leq n-1$. To see this we need to prove that no integer lies in the interval $\left( \frac{n}{2} - \frac{\sqrt{n+2}}{2}, \frac{n}{2} - \frac{\sqrt{n+1}}{2} \right)$. Suppose that there exists $m \in \mathbb{N}$ such that $\frac{n}{2} - \frac{\sqrt{n+2}}{2}< m < \frac{n}{2} - \frac{\sqrt{n+1}}{2}$. Simplifying and squaring this chain of inequalities yields $n+1< (n-2m)^2 < n+2$. But since $n+1$ and $n+2$ are consecutive integers, we obtain a contradiction. Thus we have proved that $(a_k)_{k\geq7}$ is unimodal.
\end{proof}

The following family of graphs $T^*(r,a)$ from~\cite{TuiThoCha} yields another family of graphs with unimodal general position polynomial. Take a complete $a$-partite graph, each part of which contains $r$ vertices and label the vertices in part $i$ by $i_1,\ldots ,i_r$. Then for each $i\in [r]$, delete the edges of the clique induced by the vertices $i_1, \ldots, i_a$. 

Now suppose that $S$ is a general position set of $T^*(r,a)$. Any subset lying in a single partite set is in general position. Suppose that $S$ contains three vertices (say with labels 1,2,3) in part $A$; then $S$ can contain no vertices from other partite sets, for when we add a new vertex $x$ from another part, the label of $x$ will differ from that of at least two vertices of $S$. Suppose then that $S$ contains two vertices $a_1,a_2$ of a part $A$ (say with labels 1,2); by the same reasoning $S$ can only contain vertices with labels 1 and 2. If $S$ intersects only two parts, it will be in general position, inducing either a $K_2 \cupdot K_1$ or a $2K_2$. However, $S$ cannot contain vertices from more than two partite sets; if $S$ contains a vertex $b_1$ with label 1 in a part $B$ and a vertex $c_1$ with label $1$ in a part $C$, then $b_1,a_2,c_1$ is a shortest path, whereas if $S$ contains a vertex $b_1$ in $B$ with label 1 and a vertex $c_2$ with label $2$ in $C$, then $b_1,c_2,a_1$ would again be a shortest path. It follows that the general position polynomial of this graph is given by
\begin{align*}
\psi (T^*(r,a)) = & 1+nx+{n \choose 2}x^2+2a(a-1){r \choose 2}x^3+{a \choose 2}{r \choose 2}x^4 \\
& +\sum _{i\geq 3}\left [ a{r \choose i}+r^i{a \choose i}\right ]x^i\,,
\end{align*}
where $n = ra$.

\begin{proposition}
    \label{prop:T*-unimodal-partial}
    If $a \in \{1,2\}$, then $T^*(r,a)$ is unimodal.
\end{proposition}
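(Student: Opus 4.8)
The plan is to read the coefficients directly off the displayed formula for $\psi(T^*(r,a))$ and treat the two values of $a$ separately. For $a=1$ the formula collapses: since $a-1=0$, $\binom{a}{2}=0$ and $\binom{1}{i}=0$ for $i\ge 2$, every correction term vanishes and
\[
\psi(T^*(r,1)) = 1 + rx + \binom{r}{2}x^2 + \sum_{i\ge 3}\binom{r}{i}x^i = (1+x)^r
\]
(consistent with $T^*(r,1)$ being the edgeless graph on $r$ vertices). Its coefficients are the binomial coefficients $\binom{r}{i}$, which are unimodal, so this case is immediate.

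For $a=2$ I would simplify using $n=2r$ and $\binom{2}{i}=0$ for $i\ge 3$, writing $\psi(T^*(r,2)) = \sum_{k\ge 0}a_k x^k$ with
\[
a_0 = 1,\quad a_1 = 2r,\quad a_2 = \binom{2r}{2},\quad a_3 = 4\binom{r}{2}+2\binom{r}{3},\quad a_4 = \binom{r}{2}+2\binom{r}{4},
\]
and $a_i = 2\binom{r}{i}$ for $i\ge 5$. The nonzero coefficients form a contiguous block $0,1,\dots,\deg\psi$, so there are no internal zeros and it suffices to show that the difference sequence $(a_{k+1}-a_k)$ changes sign from nonnegative to negative at most once.

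The core computation is the sign analysis of these differences. For the bulk indices $i\ge 5$ one has $a_{i+1}-a_i = 2\bigl(\binom{r}{i+1}-\binom{r}{i}\bigr)$, which is nonnegative exactly when $i\le \frac{r-1}{2}$, i.e.\ the standard unimodality of binomial coefficients. For the first few indices I would record the thresholds obtained after clearing binomials and factoring: $a_2-a_1 = r(2r-3)\ge 0$ iff $r\ge 2$; $a_3-a_2 = \tfrac13 r(r^2-3r-1)\ge 0$ iff $r\ge 4$; and both $a_4-a_3$ and $a_5-a_4$ reduce to thresholds that hold iff $r\ge 10$. A short case split then finishes the proof. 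For $r\le 3$ one checks unimodality directly (indeed $\psi(T^*(r,2))=(1+x)^{2r}$ for $r\le 2$, and $\psi(T^*(3,2))=1+6x+15x^2+14x^3+3x^4$). For $4\le r\le 9$ the differences through $a_3-a_2$ are positive while $a_4-a_3$, $a_5-a_4$ and every later bulk difference are negative, so the peak sits at $a_3$. For $r\ge 10$ all five early differences $a_1-a_0,\dots,a_5-a_4$ (covering $k\le 4$) are nonnegative, and for $k\ge 5$ the binomial rule makes $a_{k+1}-a_k$ nonnegative iff $k\le\lfloor\frac{r-1}{2}\rfloor$; since $\lfloor\frac{r-1}{2}\rfloor\ge 4$, the combined sequence is nonnegative up to $k=\lfloor\frac{r-1}{2}\rfloor$ and negative thereafter, a single sign change.

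The main obstacle is not any individual calculation but securing global unimodality, that is, guaranteeing the difference sequence turns negative only once and never swings back up. This is exactly the seam between the three hand-adjusted coefficients $a_2,a_3,a_4$ and the pure binomial tail $a_i=2\binom{r}{i}$: the thresholds $r=4$ and $r=10$ must be weighed against the bulk turning point $\frac{r-1}{2}$, and the case analysis on $r$ is organised precisely to confirm that once the differences become negative they stay negative.
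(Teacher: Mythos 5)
Your proof is correct and follows essentially the same route as the paper: read the coefficients off the displayed formula, use unimodality of the binomial tail $2\binom{r}{i}$, show the initial coefficients increase for $r\ge 10$, and dispose of small $r$ directly. The only difference is cosmetic — you resolve $4\le r\le 9$ by the sign analysis of the differences (peak at $a_3$) where the paper simply tabulates the polynomials for $r\le 9$, and your computed thresholds ($r\ge 2$, $r\ge 4$, $r\ge 10$) all check out.
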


\begin{proof}
    If $a = 1$, then $T^*(r,1) = K_r$, which is unimodal. If $a=2$, then $T^*(r,2)$ is a complete bipartite graph without a perfect matching. Simplifying its general position polynomial gives 
    \[ \psi (T^*(r,2)) = 1+2rx+{2r \choose 2}x^2+4{r \choose 2}x^3+{r \choose 2}x^4+\sum _{i\geq 3} 2{r \choose i}x^i.\]
    Observe that the sequence $(2 \binom{r}{i})_{i \geq 3}$ is unimodal. Thus if we can prove that the initial coefficients of $\psi(T^*(r,2))$ are increasing, the general position polynomial is also unimodal. This holds for $r \geq 10$, as $1 \leq 2 r \leq \binom{2 r}{2} \leq 4 \binom{r}{2} + 2 \binom{r}{3} \leq  \binom{r}{2} + 2 \binom{r}{4} \leq 2 \binom{r}{5}$ holds for $r \geq 10$. 

    \begin{table}[htb]
        \centering
        \begin{tabular}{c|l}
            $r$ & $\psi(r,2)$ \\ \hline
             $1$ & $x^2+2 x+1$ \\
             $2$ & $x^4+4 x^3+6 x^2+4 x+1$ \\
             $3$ & $3 x^4+14 x^3+15 x^2+6 x+1$ \\
             $4$ & $8 x^4+32 x^3+28 x^2+8 x+1$ \\
             $5$ & $2 x^5+20 x^4+60 x^3+45 x^2+10 x+1$ \\
             $6$ & $2 x^6+12 x^5+45 x^4+100 x^3+66 x^2+12 x+1$ \\
             $7$ & $2 x^7+14 x^6+42 x^5+91 x^4+154 x^3+91 x^2+14 x+1$ \\
             $8$ & $2 x^8+16 x^7+56 x^6+112 x^5+168 x^4+224 x^3+120 x^2+16 x+1$ \\
             $9$ & $2 x^9+18 x^8+72 x^7+168 x^6+252 x^5+288 x^4+312 x^3+153 x^2+18 x+1$ \\
        \end{tabular}
        \caption{General position polynomials for $a = 2$ and small values of $r$.}
        \label{tab:a2-rsmall}
    \end{table}
    
    The unimodality of the remaining cases can be checked by hand, see Table~\ref{tab:a2-rsmall}. 
\end{proof}

\section{Concluding remarks}
\label{sec:conclude}

Recall that the {\em corona} $G\circ K_1$ of a graph $G$ is obtained from $G$ by attaching a pendent vertex to each the vertices of $G$. We wonder whether the following extension of Theorem~\ref{thm:combs} holds: 

\begin{problem}
Assume that $\psi(G)$ is unimodal. Then is $\psi(G\circ K_1)$ also unimodal?
\end{problem}

In Proposition~\ref{prop:T*-unimodal-partial} we have proved that if $a \in \{1,2\}$, then $T^*(r,a)$ is unimodal. This leads to: 

\begin{problem}
For which pairs $(r,a)$ is the graph $T^*(r,a)$ unimodal? 
\end{problem}

For example, $T^*(r,3)$ is unimodal if $r \geq 19$, but also for some smaller values of $r$. 

Several variations of the general position number have been investigated in the literature. For example, a set $S \subseteq V(G)$ is in \emph{monophonic position} if no induced path of $G$ contains three vertices of $S$ (see~\cite{ThoChaTuiSte}), whilst $S$ is a \emph{mutual-visibility set} if for any $u,v \in S$ there exists a shortest $u,v$-path in $G$ that does not pass through $S\setminus \{ u,v\} $ (see~\cite{DiStefano}). We suggest than an interesting direction for future research would be to explore the polynomials counting such sets and their relation to the general position polynomials.

\section*{Acknowledgements}

This work was supported by the Slovenian Research Agency ARIS (research core funding P1-0297 and projects J1-2452, N1-0285, Z1-50003). Vesna Ir\v{s}i\v{c} also acknowledges the financial support from the ERC KARST project (101071836). The research of James Tuite was partially funded by LMS Research in Pairs grant number 42235; he also thanks the University of Ljubljana for their hospitality. The authors also thank Elias John Thomas for suggesting this topic.

\section*{Declaration of interests}
 
The authors declare that they have no conflict of interest. 

\section*{Data availability}
 
Our manuscript has no associated data.


\end{document}